\newtheorem{theorem}{Theorem}
\titlespacing*{\section}{0pt}{0.3\baselineskip}{0.2\baselineskip}
\begin{document}

\author{J\'ozsef Balogh\thanks{Department of Mathematical Sciences, University of Illinois at Urbana-Champaign, Urbana, Illinois 61801, USA, 
and Moscow Institute of Physics and Technology, Russian Federation. Partially supported by NSF
Grant DMS-1500121 and DMS-1764123, Arnold O. Beckman Research Award (UIUC Campus Research Board RB 18132) and
the Langan Scholar Fund (UIUC).}  
\and
William Linz\thanks{Department of Mathematical Sciences, University of Illinois at Urbana-Champaign, Urbana, Illinois 61801, USA. Email: wlinz2@illinois.edu.}
\and
Let\'{\i}cia Mattos\thanks{IMPA, Estrada Dona Castorina 110, Jardim Bot\^anico, Rio de Janeiro, 22460-320, Brazil. Partially supported by CAPES. Email: leticiamat@impa.br.}}
        
\title{Long rainbow arithmetic progressions}        
\date{\today}

\maketitle

\maketitle
\begin{abstract}
Define $T_k$ as the minimal $t\in \mathbb{N}$ for which there is a rainbow arithmetic progression of length $k$ in every equinumerous $t$-coloring of $[tn]$ for all $n\in \mathbb{N}$. Jungi\'{c}, Licht (Fox), Mahdian, Ne\u{s}et\u{r}il and Radoi\u{c}i\'{c} proved that $\lfloor{\frac{k^2}{4}\rfloor}\le T_k\le k(k-1)^2/2$. We almost close the gap between the upper and lower bounds by proving that $T_k \le k^2e^{(\ln\ln k)^2(1+o(1))}$. Conlon, Fox and Sudakov have independently shown a stronger statement that $T_k=O(k^2\log k)$.    
\end{abstract}
\section{Introduction}
An \textit{equinumerous} coloring of any set of objects is a coloring in which each color is used the exact same number of times in the coloring. Given a coloring of $[n]$, an arithmetic progression in $[n]$ is  \textit{rainbow} if each term of the arithmetic progression has a different color. We denote a $k$-term arithmetic progression by the shorthand $k$-AP. 

Jungi\'{c}, Licht (Fox), Mahdian, Ne\u{s}et\u{r}il and Radoi\u{c}i\'{c} \cite{originalrainbow} defined $T_k$ to be the minimal $t$ so that every equinumerous $t$-coloring of $[tn]$ contains a rainbow $k$-AP for every $n \in \mathbb{N}$. They proved the bounds $\lfloor{\frac{k^2}{4}\rfloor}\le T_k\le \frac{k(k-1)^2}{2}$ for every $k\ge 3$, and furthermore they conjectured that $T_k=\Theta(k^2)$. Little is known about exact values of $T_k$: Axenovich and Fon-Der-Flass~\cite{axenovich} and Jungi\'{c} and Radoi\u{c}i\'{c}~\cite{jungict3case} independently proved that $T_3=3$, and this remains the only value of $k$ for which $T_k$ is known exactly. Variations of this problem to understand the \textit{anti-van der Waerden numbers} have been considered by Butler \textit{et al.} \cite{butleretal}. 

Quite recently, Geneson~\cite{genesonimprovement} proved the upper bound $T_k = k^{\frac52+o(1)}$. Geneson~\cite{genesonimprovement} achieved this improvement by making a more careful study of the possible divisors of the differences between consecutive entries in the same color class of a given equinumerous $t$-coloring of $[tn]$, and by utilizing the K\H{o}v\'{a}ri-S\'{o}s-Tur\'{a}n theorem. In this note, we improve the upper bound in \cite{genesonimprovement} to 
almost match the lower bound of \cite{originalrainbow}. 

\begin{theorem}\label{mainthm}
$T_k\le k^2e^{(1+o(1))(\ln\ln k)^2}$, as $k\rightarrow\infty$.  
\end{theorem}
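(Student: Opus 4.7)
The plan is to argue by contradiction. Assume an equinumerous $t$-coloring of $[tn]$ avoids all rainbow $k$-APs, and derive the stated bound on $t$. The opening move is a same-color-pair count. Let $C_1, \ldots, C_t$ be the color classes, each of size $n$, and for each positive integer $D$ set
$N_D = \sum_c \#\{(a,b)\in C_c\times C_c : a<b,\ b-a=D\}$,
so $\sum_D N_D = t\binom{n}{2}$. A $k$-AP $x, x+d, \ldots, x+(k-1)d$ fails to be rainbow iff some two of its entries share a color, contributing a same-color pair at distance $ed$ with $e\in\{1,\ldots,k-1\}$. Summing (pair, AP) incidences, the number of non-rainbow $k$-APs in $[tn]$ is at most $\sum_D N_D \sum_{e\mid D,\ 1\le e\le k-1}(k-e)$. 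Comparing this with the total $\Theta(t^2n^2/k)$ of $k$-APs in $[tn]$ and insisting that not all can be non-rainbow, one derives an inequality of the form $t \le k^2 W$, where $W$ is the $N_D$-weighted average of $d_{\le k-1}(D):=\#\{e\mid D:1\le e\le k-1\}$.

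The crux is to bound $W$. For uniformly random $D$ one has average $d_{\le k-1}(D)\sim \ln k$, so if the pair counts $N_D$ were evenly spread one would already obtain $T_k=O(k^2\ln k)$. The coloring could instead concentrate same-color pairs on highly composite differences, where $d_{\le k-1}(D)$ reaches its maximal order $e^{(1+o(1))(\ln\ln k)^2}$ via a truncated Wigert-type estimate. To control this concentration I would refine Geneson's K\H{o}v\'{a}ri--S\'{o}s--Tur\'{a}n approach: for each color $c$ form the set of \emph{significant} common differences (those $d$ at which $C_c$ realizes many pairs), and consider the bipartite graph $H$ between colors and significant differences, with edge $(c,d)$ encoding ``$d$ is significant for $C_c$.'' If $H$ contained a biclique $K_{s,s}$ for a suitable $s=\Theta(\ln k)$, then $s$ colors would each possess many pairs at each of $s$ shared differences; by an inductive pigeonhole on the starting positions of a long AP whose common difference is drawn from the biclique's shared differences, one would extract a rainbow $k$-AP, contradicting the hypothesis.

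Having ruled out such a biclique, K\H{o}v\'{a}ri--S\'{o}s--Tur\'{a}n supplies an upper bound on $|E(H)|$, and matching it against the lower bound on $|E(H)|$ coming from the pair-count identity yields the desired $t\le k^2 e^{(1+o(1))(\ln\ln k)^2}$; the exponent arises from the optimal balance between $s$, the significance threshold in the definition of $H$, and the extremal order of $d_{\le k-1}$. I expect the main obstacle to be the biclique-to-rainbow-AP construction: producing $k$ equally spaced points of pairwise distinct colors from $s$ color classes sharing $s$ significant differences requires an inductive alignment of residues and a careful choice of common difference inside the biclique's difference set, and it is the loss at this step that ultimately fixes the factor $e^{(\ln\ln k)^2}$ in place of the $\ln k$ of the naive heuristic.
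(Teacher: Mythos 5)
Your overall framework --- double counting non-rainbow $k$-APs through the monochromatic pairs they contain, and reducing the problem to bounding the $N_D$-weighted average $W$ of the truncated divisor function $d_{\le k-1}(D)$ --- matches the paper's starting point, but the step where the theorem would actually be proved has a genuine gap, in two places. First, your claim that $d_{\le k-1}(D)$ has maximal order $e^{(1+o(1))(\ln\ln k)^2}$ is false: since $D=b-a$ ranges up to $tn$ with $n$ arbitrary, $D$ can be a multiple of $\mathrm{lcm}(1,\dots,k-1)=e^{(1+o(1))k}$, in which case $d_{\le k-1}(D)=k-1$; even for $D$ a mere multiple of a highly composite $m\le k-1$ one already has $d_{\le k-1}(D)\ge d(m)=e^{\Theta(\ln k/\ln\ln k)}$, which dwarfs $e^{(\ln\ln k)^2}$. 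So the adversarial concentration of same-color pairs on composite differences is a real threat that no Wigert-type extremal estimate caps at the claimed level, and in the worst case your inequality $t\le k^2W$ gives nothing beyond the trivial $t\le k^3$. Second, the proposed repair --- excluding a $K_{s,s}$ in the colors-versus-significant-differences bipartite graph and applying K\H{o}v\'{a}ri--S\'{o}s--Tur\'{a}n --- is only a sketch: the ``biclique implies rainbow $k$-AP'' extraction, which you yourself flag as the main obstacle, is precisely the missing argument, and no computation is offered showing that the parameters balance to $e^{(1+o(1))(\ln\ln k)^2}$. This is essentially Geneson's route, which as executed yields only $k^{5/2+o(1)}$.

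The paper sidesteps the concentration problem entirely by a simpler device: it discards the atypical common differences \emph{before} counting. Writing $\nu_k(d)$ for the number of prime factors of $d$ that are at most $k$ (with multiplicity), it restricts attention to the family $\mathcal{A}$ of $k$-APs whose common difference lies in the set of $d\in[tn/10k,\,tn/2k]$ with $\nu_k(d)<(1+o(1))\ln\ln k$; by Tur\'an's proof of the Hardy--Ramanujan theorem this discards only $o(tn/k)$ differences, so $|\mathcal{A}|>\frac{t^2n^2}{11k}$. The gain is that for a fixed monochromatic pair $(a,b)$, any member of $\mathcal{A}$ containing both corresponds to a factorization $b-a=dm$ with $d$ admissible and $m\le k$; then $b-a$ has at most $\log_2 k+(1+o(1))\ln\ln k$ prime factors below $k$, of which at most $(1+o(1))\ln\ln k$ can go into $d$, so the number of such factorizations is at most $\binom{\log_2 k+(1+o(1))\ln\ln k}{\le(1+o(1))\ln\ln k}\le e^{(1+o(1))(\ln\ln k)^2}$, giving a uniform per-pair bound of $k\,e^{(1+o(1))(\ln\ln k)^2}$ with no extremal graph theory at all. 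If you want to salvage your version, the fix is not to bound the maximal order of $d_{\le k-1}$ (which cannot work) but to restrict the counted APs to typical differences in exactly this way.
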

A stronger result, that $T_k=O(k^2\log k)$, was obtained independently by Conlon, Fox and Sudakov \cite{conlonfoxsudakovresult}. Compared to our proof, their method considers fewer $k$-APs, but they are able to obtain a better bound because they overcount each $k$-AP only once. 


\section{Proof of Theorem \ref{mainthm}}

Let $t$ be the minimum number so that there is a rainbow $k$-AP for every equinumerous $t$-coloring of $[tn]$ for every $n\ge 1$. From the bounds of \cite{originalrainbow}, we may assume that $\lfloor{\frac{k^2}{4}\rfloor}\le t < k^3$. Furthermore, the number of $k$-APs in $[tn]$ is greater than 
\[
\dfrac{tn(tn-3(k-1))}{2(k-1)}.
\]

Let $D:= \{d\in [tn /10k, tn/2k]:  \nu_{k}(d) < (1+o(1))\ln\ln k\}$, where $\nu_{k}(d)$ is the number of prime divisors of $d$ that are at most $k$, counted with multiplicity. Here the $o(1)$ term is as $k\rightarrow \infty$. 

\begin{restatable}[]{lemma}{primedivisorlemma}
\label{primedivisorlemma}
$|[tn / 10k, tn / 2k] \setminus D|=o(tn/k)$.
\end{restatable}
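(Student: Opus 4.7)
My plan is to derive the lemma from a Tur\'an--Kubilius style second moment estimate applied to the completely additive function $\nu_k$ on $[1,N]$, with $N := \lfloor tn/(2k)\rfloor$. Writing $\nu_k(d) = \sum_{p\le k}\sum_{m\ge 1}\mathbf{1}_{p^m\mid d}$ and summing $\lfloor N/p^m\rfloor$ over $p\le k$ and $m \ge 1$, Mertens' theorem gives the mean
\[
\frac{1}{N}\sum_{d\le N}\nu_k(d) \;=\; \sum_{p\le k}\frac{1}{p-1} + o(1) \;=\; \ln\ln k + O(1).
\]

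For the variance, I would use that for distinct primes $p, q \le k$ and integers $a, b \ge 1$, the number of $d \le N$ divisible by $p^a q^b$ is $N/(p^a q^b) + O(1)$. Expanding $\sum_{d \le N}\nu_k(d)^2$, the cross terms ($p \neq q$) almost exactly cancel the squared mean, while the diagonal contribution is $O\bigl(N \sum_{p \le k} 1/p\bigr) = O(N \ln\ln k)$. The standard Tur\'an-style bookkeeping then yields
\[
\sum_{d \le N}\bigl(\nu_k(d) - \ln\ln k\bigr)^2 \;=\; O(N \ln\ln k).
\]

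Chebyshev's inequality then gives, for any $\varepsilon > 0$,
\[
\bigl|\{d \le N : \nu_k(d) \ge (1+\varepsilon)\ln\ln k\}\bigr| \;=\; O\!\left(\frac{N}{\varepsilon^2 \ln\ln k}\right).
\]
Choosing $\varepsilon = \varepsilon(k) \to 0$ slowly enough that $\varepsilon^2 \ln\ln k \to \infty$ (for instance $\varepsilon = (\ln\ln\ln k)^{-1/2}$), the right-hand side is $o(N) = o(tn/k)$. Identifying this $\varepsilon$ with the implicit $o(1)$ in the definition of $D$ completes the argument, since $[tn/10k, tn/2k] \setminus D$ is contained in the set above.

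The main obstacle I anticipate is the variance computation: one must carefully handle the cross terms between distinct primes, the contributions from higher prime powers (including $p = q$ with distinct exponents $a, b$), and the rounding losses from $\lfloor N/(p^a q^b)\rfloor - N/(p^a q^b)$. These are routine but must be controlled uniformly in $k$; the total contribution of these error terms is $O(\pi(k)^2)$, which is negligible compared to $N\ln\ln k$ in the regime $N \gg k^2/\log^2 k$ relevant to the main theorem (since $t \ge \lfloor k^2/4\rfloor$ and the interesting case has $n \to \infty$). For very small $N$ one has $\nu_k(d) \le \log_2 N$ deterministically, which is already below $(1+\varepsilon)\ln\ln k$ once $N$ is polylogarithmic in $k$, so the lemma holds trivially there.
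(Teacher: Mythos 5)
Your proposal is correct and follows essentially the same route as the paper: the paper's own proof is a one-line appeal to ``a simple modification of the proof of Tur\'an's'' theorem on the normal order of the number of prime factors (citing Alon--Spencer, pp.~45--46) with all details omitted, and your mean/variance computation for $\nu_k$ followed by Chebyshev with $\eps=\eps(k)\to0$ slowly is exactly that modification. One small remark: the uniform Tur\'an--Kubilius inequality (counting floor errors only over prime-power pairs with $p^aq^b\le N$) gives $\sum_{d\le N}(\nu_k(d)-\ln\ln k)^2=O(N\ln\ln k)$ for all $N$, so your separate treatment of small $N$ is unnecessary --- which is just as well, since the trivial bound $\nu_k(d)\le\log_2 N$ beats $(1+\eps)\ln\ln k$ only for $N\le(\ln k)^{(1+\eps)\ln 2}$, not for all polylogarithmic $N$.
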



\begin{proof}[Proof.]
A simple modification of the proof of  Tur\'{a}n's \cite{turan} that almost all integers at most $n$ has about $\ln\ln n$ prime factors (see, for instance, Alon and Spencer \cite[pp.~45--46]{alonspencer}) shows that the number of integers that are at most $tn / 2k$ and which have more than $(1+o(1))\ln\ln k$ prime divisors at most $k$ is $o(tn/k)$; we omit the details. 
\end{proof}

Let $\mathcal{A}$ be the set of $k$-APs in $[tn]$ with difference in the set $D$. We have that 
\begin{equation}\label{Akaps}
|\mathcal{A}| > \frac{t^2n^2}{11k}
\end{equation}
for $k$ sufficiently large.
We count the number of non-rainbow $k$-APs in $\mathcal{A}$. Each such non-rainbow $k$-AP contains a monochromatic pair $(a, b)$. There are $tn$ choices for $a$, and given a choice of $a$, there are at most $n$ choices for $b$. 

We claim that for any pair $(a,b)$, $a,b \in [tn]$, the number of $k$-APs in $\mathcal{A}$ containing $(a,b)$ is bounded by $k\cdot e^{(1+o(1))(\ln \ln k)^2}$. Indeed, either $b-a$ has a representation of the form $b-a = dm$, with $d \in D$ and $m\le k$, or there is no $k$-APs in $\mathcal{A}$ containing $(a,b)$.  In the former case $b-a$ has at most $\log_2 k + (1+o(1))\ln\ln k$ prime factors at most $k$ (with $\log_2 k$ factors coming from $m$ and $(1+o(1))\ln\ln k$ factors coming from $d$). Therefore, the number of ways to factorize $b-a = dm$ is the number of ways to select at most $(1+o(1)) \ln\ln k$ prime factors among all the $\log_2 k + (1+o(1))\ln\ln k$ prime factors that $b-a$ has. That number is upper bounded  by 
\[\binom{\log_2 k + (1+o(1))\ln\ln k}{\le (1+o(1))\ln\ln k} \le e^{(1+o(1))(\ln \ln k)^2}.\]
Finally, given $m$, there are at most $k$ choices for the positions of $a$ and $b$ in a $k$-AP. This implies that the number of non-rainbow $k$-APs containing both $a$ and $b$ is at most $ke^{(1+o(1))(\ln\ln k)^2}$.

Therefore, there are at most
$(tn)(n)(ke^{(1+o(1))(\ln\ln k)^2})$ non-rainbow $k$-APs in $\mathcal{A}$. Combining this with the bound for the number of $k$-APs in $\mathcal{A}$ from \eqref{Akaps}, an upper bound for $T_k$ is given by the smallest $t$ satisfying 
\[ tn^2ke^{(1+o(1))(\ln\ln k)^2}\le \frac{t^2n^2}{11k}.\]

It suffices to take $t=11k^{2}e^{(\ln\ln k)^2(1+o(1))}=k^{2}e^{(\ln\ln k)^2(1+o(1))}$, where the $o(1)$ in the exponent swallows up the factor $11$, completing the proof.



\section*{Acknowledgments}
The authors thank Ran Ji and Mina Nahvi for introducing this problem to us, and also thank Mina Nahvi for help in preparing an early version of this manuscript. The authors thank Kevin Ford for helpful discussions.  The authors thank the referee for carefully reading the manuscript and for useful comments.

{\setstretch{0.88}

}
\end{document}